\newcommand\keywordsname{Key words}
\newcommand\AMSname{AMS subject classifications}
\newenvironment{@abssec}[1]
{\if@twocolumn
\section*{#1}%
\else
\vspace{.05in}\footnotesize
\parindent .2in
{\upshape\bfseries #1. }\ignorespaces
\fi}
\par\vspace{.1in}\fi}
\newenvironment{keywords}{\begin{@abssec}{\keywordsname}}{\end{@abssec}}
\providecommand{\Div}{\operatorname{div}}          
\providecommand{\curl}{\operatorname{{\bf curl}}}  
\providecommand*{\Dist}[2]{\operatorname{dist}({#1};{#2})}   
\providecommand*{\Dist}[2]{\Dist{#1}{#2}}
\newcommand{\Vb}{{\mathbf{b}}}
\newcommand{\Ve}{{\mathbf{e}}}
\newcommand{\Vr}{{\mathbf{r}}}
\newcommand{\Vx}{{\mathbf{x}}}
\newcommand{\Ba}{{\boldsymbol{a}}}
\newcommand{\Bf}{{\boldsymbol{f}}}
\newcommand{\Bg}{{\boldsymbol{g}}}
\newcommand{\Bn}{{\boldsymbol{n}}}
\newcommand{\Bv}{{\boldsymbol{v}}}
\newcommand{\Bw}{{\boldsymbol{w}}}
\newcommand{\Bx}{{\boldsymbol{x}}}
\newcommand{\VC}{{\mathbf{C}}}
\newcommand{\VD}{{\mathbf{D}}}
\newcommand{\BA}{{\boldsymbol{A}}}
\newcommand{\BB}{{\boldsymbol{B}}}
\newcommand{\BE}{{\boldsymbol{E}}}
\newcommand{\BF}{{\boldsymbol{F}}}
\newcommand{\BH}{{\boldsymbol{H}}}
\newcommand{\BJ}{{\boldsymbol{J}}}
\newcommand{\BK}{{\boldsymbol{K}}}
\newcommand{\BL}{{\boldsymbol{L}}}
\newcommand{\BM}{{\boldsymbol{M}}}
\newcommand{\BP}{{\boldsymbol{P}}}
\newcommand{\BX}{{\boldsymbol{X}}}
\newcommand{\varphibf}{\boldsymbol{\varphi}}
\newcommand{\Ct}{\mathcal{T}}
\newcommand{\bbR}{\mathbb{R}}
\newcommand*{\N}[1]{\left\|{#1}\right\|}     
\newcommand*{\Lp}[2][\defaultdomain]{L^{#2}({#1})}
\newcommand*{\Lpv}[2][\defaultdomain]{\BL^{#2}({#1})}
\newcommand*{\NLp}[3][\defaultdomain]{\N{#2}_{\Lp[#1]{#3}}}
\newcommand*{\Ltwo}[1][\defaultdomain]{\Lp[#1]{2}}
\newcommand*{\Ltwov}[1][\defaultdomain]{\Lpv[#1]{2}}
\newcommand*{\NLtwo}[2][\defaultdomain]{\NLp[#1]{#2}{2}}
\newcommand*{\Hmv}[2][\defaultdomain]{\BH^{#2}({#1})}
\newcommand*{\Honev}[1][\defaultdomain]{\Hmv[#1]{1}}
\newcommand*{\Hcurl}[1][\defaultdomain]{\boldsymbol{H}(\curl,{#1})}
\newcommand*{\bHcurl}[2][\defaultdomain]{\boldsymbol{H}_{#2}(\curl,{#1})}
\newcommand*{\zbHcurl}[1][\defaultdomain]{\bHcurl[#1]{0}}
\newcommand*{\NHcurl}[2][\defaultdomain]{\N{#2}_{\Hcurl[#1]}}
\newcommand{\D}{\mathrm{d}}
\newcommand{\be}{\begin{eqnarray}}
\newcommand{\ee}{\end{eqnarray}}
\newcommand{\ben}{\begin{eqnarray*}}
\newcommand{\een}{\end{eqnarray*}}
\newtheorem{theorem}{\sc Theorem}[section]
\newtheorem{algorithm}[theorem]{\sc Algorithm}
\newtheorem{example}[theorem]{\sc example}
\theoremstyle{nonumberplain}
\newtheorem{proof}{\textbf{Proof.}}
\title{A novel divergence-free Finite Element Method for the MHD Kinematics equations using Vector-potential}
\author{Lingxiao Li
\thanks{Academy of Mathematics and Systems Science, Chinese Academy of Sciences, Beijing, 100190, China;
School of Mathematical Science, University of Chinese Academy of Sciences, Beijing, 100049, China (lilingxiao@lsec.cc.ac.cn).}}
\begin{document}
\date{\textit{Dedicated to my family and in memory of my Grandma.}}
\maketitle

\begin{abstract}
  We propose a new mixed finite element method for the three-dimensional steady magnetohydrodynamic (MHD) kinematics equations for which the velocity of the fluid is given.
  Although prescribing the velocity field leads to a simpler model than full MHD equations, its conservative and efficient numerical methods are still active research topic.
  The distinctive feature of our discrete scheme is that the divergence-free conditions for current density and magnetic induction are both satisfied.
  To reach this goal, we use magnetic vector potential to represent magnetic induction and resort to H(div)-conforming element to discretize the current density.
  We develop an preconditioned iterative solver based on a block preconditioner for the algebraic systems arising from the discretization.
  Several numerical experiments are implemented to verify the divergence-free properties,
  the convergence rate of the finite element scheme and the robustness of the preconditioner.
\end{abstract}

\begin{keywords}
Magnetohydrodynamics; Kinematics equations; Divergence-free finite element method; Block preconditioner; Magnetic vector potential; Eddy currents model.
\end{keywords}

\section{Introduction}
\label{sec:introduction}

Magnetohydrodynamics (MHD) has broad applications in our real world.
It describes the interaction between electrically conducting fluids
and magnetic fields. It is used in industry to heat, pump, stir, and
levitate liquid metals. MHD model also governs the terrestrial magnetic filed maintained by fluid motion in the earth
core and the solar magnetic field which generates sunspots and solar
flares\cite{dav01}. The general full MHD model consists of the
Navier-Stokes equations and the quasi-static Maxwell equations.
The magnetic field influences the momentum of the fluid
through Lorentz force, and conversely, the motion of fluid influences the magnetic field through Faraday's law. In this paper,
we are studying the divergence-free finite element method and efficient iterative solver for the MHD kinematics equations
\begin{subequations}\label{eq:mhd}
\begin{align}
\partial_t\BB + \curl\BE &= \mathbf{0}  \hspace{0.5mm}\qquad  \hbox{in}\;\;(0,T]\times\Omega,   \label{eq:Faraday}\\
\curl\BH &= \BJ                         \qquad \hbox{in}\;\;[0,T]\times\Omega,                  \label{eq:ns}\\
\sigma(\BE+\Bw\times\BB) &= \BJ         \qquad \hbox{in}\;\;(0,T]\times\Omega,                  \label{eq:OhmLaw}\\
\Div\BJ =0,   \quad\Div\BB&=0           \hspace{1.0mm}\qquad \hbox{in}\;\;[0,T]\times\Omega.    \label{eq:div}
\end{align}
\end{subequations}
where $\Bw$ is the known velocity field (\textbf{here $\Bw$ is not necessary divergence-free, so the method developed in
this paper can be extended to the compressible MHD equations}),
$\BB$ is the magnetic flux density or the magnetic field provided with constant permeability.
We assume that $\Omega$ is a bounded, simply-connected, and Lipschitz polyhedral domain with boundary $\Gamma=\partial\Omega$.
When $\Bw \equiv 0$ in the domain, this is indeed corresponding to the classical eddy currents model\cite{buff2000, hip02, jiang2012}.
The equations in \eqref{eq:mhd} are complemented with the following constitutive equation
\begin{equation}
\BB=\mu\BH
\end{equation}

Though prescribing the velocity field leads to a simpler model than full MHD equations,
its conservative and efficient numerical methods are still active research topic.
The equations of MHD kinematics
have interest applications in the field of dynamo theory, where the Lorentz force $\BJ\times\BB$ is assumed to be small
compared to inertia\cite{liu2007, mini2005}. In such situation, the electromagnetics have little influence
on the fluid and the velocity could be prescribed. Then we can investigate the variation of the magnetic field caused by
the flow profile. Such applications contain MHD generators, dynamo of the sun, brine and the geodynamo.
It is interesting that whether a given $\Bw$ can sustain dynamo action.
At last, as an important block of the full MHD equations, just as the Navier-Stokes equations, more efficient algorithm
for the sub-block of course will lead to more efficient method for MHD equations.

There are extensive papers in the literature to study numerical
solutions of MHD equations (cf. e.g. \cite{ger06,gre10,gun91,ni12,ma16, ni07-1,ni07-2}
and the references therein). In \cite{gun91}, Gunzburger et al
studied well-posedness and the finite element
method for the stationary incompressible MHD
equations. The magnetic field is discretized by the $\Honev$-conforming
finite element method.
We also refer to \cite{ger06} for a systematic
analysis on finite element methods for incompressible MHD
equations. When the domain has re-entrant angle, the magnetic field
may not be in $\Honev$. It is preferable to use noncontinuous finite
element functions to approximate $\BB$, namely, the so-called edge
element method\cite{hip02,ne86}. In 2004, Sch\"{o}tzau\cite{sch04}
proposed a mixed finite element method to solve the stationary
incompressible MHD equations where edge elements are used to solve the
magnetic field. For the finite element model above, only the divergence-free condition
of the current density $\BJ_h$ can be precisely satisfied, because they using the definition
$\BJ = \curl\BB$.

In recent years, exactly divergence-free approximations for $\BJ$ and $\BB$ have attracted
more and more interest in numerical simulation. For the current density $\BJ$ we would like to mention the
current density-conservative finite volume methods of Ni et al. for the inductionless MHD model on both structured and unstructured
grids\cite{ni12, ni07-1, ni07-2}. In \cite{hu17, ma16}, the authors discretize the electric field $\BE$ by N\'{e}d\'{e}lec's edge elements
and discretize the magnetic induction $\BB$ by Raviart-Thomas elements. In such way, $\Div\BB_h$ is precisely zero in the discrete level.
Recently in\cite{hip2017}, Hiptmair, Li, Mao and Zheng proposed a novel mixed finite element method in 3D using magnetic vector $\BA$
with temporal gauge to ensure the divergence-free condition
for $\BB_h$ by $\BB_h = \curl\BA_h$. Very recently, Li, Ni and Zheng devised a electron-conservative mixed finite element method
for the inductionless MHD equations where an applied magnetic field is prescribed\cite{li2017}, for which $\|\Div\BJ_h\| = 0$ can be satisfied
precisely. They also developed a robust block preconditioner for
the linear algebraic systems. However, none of the work mentioned above can achieve the goal
\[\Div\BJ_h = 0,\quad \Div\BB_h = 0\]
precisely at the same time in the finite element framework for the induced MHD equations.

Instead of solving for $\BB$ and $\BE$, we solve for the magnetic vector potential
$\BA$ and scalar electric potential $\phi$ such that
\begin{equation}\label{eq:trans}
\BB = \curl\BA,\quad \BE = -\partial_t\BA - \nabla\phi
\end{equation}
with the Coulomb's gauge rather than temporal gauge in\cite{hip2017}
\[\Div\BA = 0\]
Using the magnetic vector potential $\BA$ and scalar potential $\phi$, we develop a special mixed finite element method
that ensures exactly divergence-free approximations of the magnetic induction and current density.
To the best knowledge of the author, the finite element scheme which can preserve the divergence-free conditions for both
the current density and magnetic induction has never been reported in the literature. So the work in our paper could be seen as
a first step to the fully-conservative method for the fully-coupled MHD equations with induction. Another objective of this paper is to propose a preconditioned iterative
method to solve the algebraic systems associated with the proposed fully divergence-free FE scheme.

The paper is organized as follows:
In section 2, we introduce a new dimensionless model using magnetic vector potential $\BA$ and electrical scalar potential $\phi$.
The perfect conducting boundary conditions are given for the new model for practical implementation.
In section 3, we introduce a variational formulation for the MHD kinematics equations and give the energy law for the continuous problem.
In Section 4, we present a novel finite element scheme which can preserve the divergence-free properties precisely.
And a block preconditioner is given to develop a preconditoned FGMRES algorithm for the solving of the algebraic systems.
In section 5, three numerical experiments are conducted to verify the conservation of the discrete field or divergence-free characteristic,
the convergence rate of the mixed finite element method, and to demonstrate the optimality and the robustness of the iterative solver.
In section 6, some conclusions and further investigations are pointed out.

Throughout the paper we denote vector-valued quantities by boldface notation, such as $\Ltwov:=(\Ltwo)^3$.

\section{A dimensionless model using vector potential}
Using the transformation \eqref{eq:trans} and the Coulomb's gauge condition, one can obtain the following new PDE systems
\begin{subequations}\label{eq:new-mhd}
\begin{align}
\sigma^{-1}\BJ + \nabla\phi - \Bw\times\curl\BA +\partial_t\BA = \mathbf{0},&\quad \mathrm{in}~~(0,T]\times\Omega
\label{current}\\
\Div\BJ = 0, &\quad \mathrm{in}~~[0,T]\times\Omega
\label{conserv}\\
-\mu\BJ + \curl\curl\BA = \mathbf{0},&\quad \mathrm{in}~~(0,T]\times\Omega
\label{vectorp}\\
\Div\BA = 0, &\quad \mathrm{in}~~[0,T]\times\Omega
\label{aguage}
\end{align}
\end{subequations}
Let $L$, $t_0$, $B_0$ and $\sigma_0$ be the characteristic length, characteristic time, characteristic magnetic flux density and reference conductivity respectively.
Thus $u_0=L/t_0$ will be the characteristic fluid velocity. Due to the relation $\BB=\curl\BA$, we can introduce the characteristic vector potential
\begin{equation*}
A_0 = B_0L
\end{equation*}

If we make the following scaling of the variables
\begin{equation}\label{mhdscal}
t\leftarrow t\frac{1}{t_0},~\Bx\leftarrow \Bx\frac{1}{L},~\Bw\leftarrow \Bw\frac{1}{u_0},~\BJ\leftarrow\BJ\frac{1}{\sigma_0 u_0 B_0},~\phi\leftarrow \phi\frac{1}{u_0B_0L},
~\sigma\leftarrow \sigma\frac{1}{\sigma_0},~\BA\leftarrow \BA\frac{1}{A_0}
\end{equation}
and introduce the following dimensionless parameter
\begin{equation*}
\textsf{Rm} = \mu\sigma_0 L u_0
\end{equation*}
For the eddy currents model, $\Bw$ is identically zero, so we only need to make the following scaling
\begin{equation}\label{eddyscal}
t\leftarrow t\frac{1}{t_0},~\Bx\leftarrow \Bx\frac{1}{L},~\BJ\leftarrow\BJ\frac{t_0}{\sigma_0B_0L},~\phi\leftarrow \phi\frac{t_0}{B_0L^2},
~\sigma\leftarrow \sigma\frac{1}{\sigma_0},~\BA\leftarrow \BA\frac{1}{A_0}
\end{equation}
and define
\[\textsf{Rm} = \frac{\mu\sigma_0 L^2}{t_0}\]
Then we can get the desired dimensionless formulation
\begin{subequations}\label{eq:new-mhd-dimen}
\begin{align}
\sigma^{-1}\BJ + \nabla\phi - \Bw\times\curl\BA +\partial_t\BA = \Bf,&\quad \mathrm{in}~~(0,T]\times\Omega
\label{NewCurrent}\\
\Div\BJ = 0, &\quad \mathrm{in}~~[0,T]\times\Omega
\label{NewConserv}\\
-\BJ + \textsf{Rm}^{-1}\curl\curl\BA = \Bg,&\quad \mathrm{in}~~(0,T]\times\Omega
\label{NewCectorp}\\
\Div\BA = 0, &\quad \mathrm{in}~~[0,T]\times\Omega
\label{NewAguage}
\end{align}
\end{subequations}
where $\Bf = \Bg = \mathbf{0}$ generally with non-zero boundary conditions for the physical field. But for analytic test $\Bf$ and $\Bg$ may be nonzero,
so we retain the symbols $\Bf$ and $\Bg$ for general purpose.
The steady model of \eqref{eq:new-mhd-dimen} reads
\begin{subequations}\label{eq:steady-mhd}
\begin{align}
\sigma^{-1}\BJ + \nabla\phi - \Bw\times\curl\BA  = \Bf,&\quad \mathrm{in}~~\Omega\label{eq:steady-current}\\
\Div\BJ = 0, &\quad \mathrm{in}~~\Omega\\
-\BJ + \textsf{Rm}^{-1}\curl\curl\BA = \Bg,&\quad \mathrm{in}~~\Omega\label{eq:steady-A}\\
\Div\BA = 0, &\quad \mathrm{in}~~\Omega
\end{align}
\end{subequations}
We refer to the new model \eqref{eq:new-mhd} and \eqref{eq:steady-mhd} for the Kinematics equations \eqref{eq:mhd}
as $\mathrm{CPP^3}$-model (Current-Potential-vector Potential-model).

Generally, in the dimensionless systems $\sigma$ has the magnitude of $O(1)$.
The system of equations \eqref{eq:new-mhd-dimen} and \eqref{eq:steady-mhd} are still needed to be complemented with suitable boundary conditions.
Because it is a particularly important case when the material on one side of the interface is a perfect conductor.
So also for concise representation, we focus our attention on the perfectly electrically conductive (PEC) boundary conditions.
We will give general PEC boundary conditions on the interface between two different kinds of fluid, and then decide the special case used in the new model
for practical purpose.
In practical physical systems, if the boundary is perfect conducting, the electrical potential $\phi$ should be identical everywhere on the boundary.
So we can prescribe zero Dirichlet boundary conditions for $\phi$ in \eqref{eq:new-mhd-dimen}.

Next we still need to determine the boundary condition for $\BA$.
Note that for the original systems
\eqref{eq:mhd} the PEC boundary conditions means that $\sigma\rightarrow\infty$. From generalized Ohm's law \eqref{eq:OhmLaw},
we see heuristically that if the current density $\BJ$ is to remain bounded then
\begin{equation}\label{PEC1}
\BE^{*} := \BE+\Bw\times\BB \rightarrow \mathbf{0} \qquad\hbox{on}\;\;\Gamma.
\end{equation}
Let $\BE_1, \BE_2, \Bw_1, \Bw_2, \BB_1$ and $\BB_2$ be the limiting value of the physical field
as the interface $\Gamma$ is approached.
Then \eqref{PEC1} suggests that in a perfect conductor the total electrical field vanishes
\[\BE^{*}_2 := \BE_2+\Bw_2\times\BB_2 = \mathbf{0}\]
Then $\BE_2 = -\Bw_2\times\BB_2$ holds outside the domain $\Omega$. Because the tangential component of $\BE$ across $\Gamma$ is continuous, we must have
\begin{equation}\label{PEC2}
\BE_1\times\Bn = \BE_2\times\Bn = \Bn \times (\Bw_2\times\BB_2)
\end{equation}
Using identity $\Bn \times (\Bw_2\times\BB_2) = \Bw_2(\Bn\cdot\BB_2) - \BB_2(\Bn\cdot\Bw_2)$, and the continuity for velocity and magnetic induction
\[\BB_1\cdot\Bn = \BB_2\cdot\Bn,\quad \Bw_1\cdot\Bn = \Bw_2\cdot\Bn\]
for two-fluid case. Based on \eqref{PEC2} one obtains
\begin{equation}\label{PEC3}
\BE_1\times\Bn = \Bw_2(\BB_1\cdot\Bn) - \BB_2(\Bw_1\cdot\Bn) \qquad\hbox{on}\;\;\Gamma.
\end{equation}
For a general static solid wall, we must have $\Bw_2 \equiv \mathbf{0}$ and $\Bw_1\cdot\Bn \equiv 0$ on the whole boundary.
Thus the PEC boundary condition for the MHD kinematics equations \eqref{eq:mhd} reads
\begin{equation}\label{PEC4}
\BE\times\Bn = \mathbf{0} \qquad\hbox{on}\;\;\Gamma.
\end{equation}
In summary, given the applied magnetic induction $\BB_s$, the complete PEC boundary conditions reads
\begin{equation}\label{PEC5}
\phi = 0,~~\BE\times\Bn = \mathbf{0},~~\BB\cdot\Bn = \BB_s\cdot\Bn \qquad\hbox{on}\;\;\Gamma:=\partial\Omega.
\end{equation}
if the normal component of $\Bw$ vanishes on $\Gamma$.
For non-zero $\BB_s$ we can reduce the problem with $\BB\cdot\Bn = \BB_s\cdot\Bn$ to a problem with homogeneous boundary conditions $\BB\cdot\Bn=0$, if we make a "lifting"
procedure $\BB \Leftarrow \BB - \BB_s$ and with a source term $\Bf \Leftarrow \Bf + \Bw\times\BB_s$ in \eqref{NewCurrent} and \eqref{eq:steady-current}.
Denoting the corresponding vector potential for $\BB_s$ by $\BA_s$,
then $\BA_s$ is indeed the initial condition for $\BA$.

Let $\BA \Leftarrow \BA-\BA_s$,
and then $\Bg \Leftarrow \Bg - \textsf{Rm}^{-1}\curl\curl\BA_s = \Bg - \textsf{Rm}^{-1}\curl\BB_s$ in \eqref{NewCectorp} and
\eqref{eq:steady-A}. After "lifting", the initial condition for $\BA$ becomes zero.
Recall that
\begin{equation}
\BB\cdot\Bn = \left.(\curl\BA)\right|_{\Gamma}\cdot\Bn \triangleq \Div_{\Gamma}(\BA\times\Bn)
\end{equation}
If we pose the boundary conditions $\BA\times\Bn = \mathbf{0}$ we naturally get $\BB\cdot\Bn = 0$. Moreover from the transformation \eqref{eq:trans}
and conditions \eqref{PEC5} we then have
\begin{equation*}
\left.\BE \times \Bn \right|_{\Gamma} = \left(-\partial_t\BA)\times\Bn\right|_{\Gamma} = \textbf{0}
\end{equation*}
This indicates
\begin{equation}\label{eq:bdele1}
\left.\BA(t)\times\Bn\right|_{\Gamma} =  \mathbf{0},\quad \forall t \in(0,T]
\end{equation}
Now our PEC boundary conditions for \eqref{eq:new-mhd-dimen} and \eqref{eq:steady-mhd} are stated as follows
\begin{equation}\label{eq:bc}
\phi = 0,\quad \BA\times\Bn = \mathbf{0} \qquad\hbox{on}\;\;\Gamma.
\end{equation}
with $\Bf \Leftarrow \Bf + \Bw\times\BB_s$ and $\Bg \Leftarrow \Bg - \textsf{Rm}^{-1}\curl\BB_s$, provided $\Bw\cdot\Bn \equiv 0$ on $\Gamma$.

Or more generally
\begin{equation}\label{eq:bc1}
\phi = \phi_w,\quad \BA\times\Bn = \BA_w\times\Bn \qquad\hbox{on}\;\;\Gamma.
\end{equation}
In the following, we will focus on the steady version \eqref{eq:steady-mhd} and devise a divergence-free finite element method.
For convenience we consider the boundary conditions \eqref{eq:bc} with non-zero right-hand sides $\Bf$ and $\Bg$.
We will use electrical resistivity $\eta$ instead of $\sigma^{-1}$ and $\nu_m$ instead of $\textsf{Rm}^{-1}$ in some places.
\vspace{5mm}

\textbf{Remark 1.} It suffices to decide $\BA_s$ by solving the following problem
\begin{align}
\curl\BA_s= \BB_s,\quad \Div\BA_s =0 &\quad\hbox{in}\;\;\Omega, \label{eq:A0}\\
\BA_s\times\Bn = 0 &\quad\hbox{on}\;\;\Gamma. \label{bc:A0}
\end{align}
provided $\BB_s\cdot\Bn = 0$ on $\Gamma$. For steady systems \eqref{eq:steady-mhd}, due to $\BE = -\nabla\phi$ and zero boundary condition for $\phi$
we naturally obtain $\left.\BE\times\Bn\right|_\Gamma = \mathbf{0}$. We also refer the reader to the work\cite{four06} by Fournier which is concerned
with the electrically insulating boundary conditions for incompressible MHD equations. The author uses Fourier-spectral element to discretize
$\BA$ rather than finite element method.

\section{Variational formulation for the MHD kinematics}
First we introduce some Hilbert spaces and Sobolev norms used in
this paper. Let $L^2(\Omega)$ be the usual Hilbert space of square
integrable functions equipped with the following inner product and
norm:
\begin{eqnarray}
(u,v):=\int_{\Omega}u(\Bx)\,v(\Bx)\D\Bx \quad \hbox{and} \quad
\NLtwo{u}:=(u,u)^{1/2}. \nonumber
\end{eqnarray}

Define $H^m(\Omega):=\{v\in L^2(\Omega): D^{\alpha}v\in
L^2(\Omega),|\alpha|\le m\}$ where $\alpha$ represents non-negative triple
index. Let $H^1_0(\Omega)$ be the subspace of $H^1(\Omega)$ whose
functions have zero traces on $\Gamma$. We define the spaces of functions having square integrable curl by
\begin{eqnarray}
\Hcurl&:=&\{\Bv\in\Ltwov\,:\;\curl\Bv\in \Ltwov\}, \nonumber\\
\zbHcurl&:=&\{\Bv\in\Hcurl\,:\;\Bn\times\Bv=0\;\;
\hbox{on}\;\Gamma\}, \nonumber
\end{eqnarray}
which are equipped with the following inner product and norm
\begin{equation*}
(\Bv,\Bw)_{\Hcurl}:=(\Bv,\Bw)+(\curl\Bv,\curl\Bw), \;\;
\NHcurl{\Bv}:=\sqrt{(\Bv,\Bv)_{\Hcurl}}\;.
\end{equation*}
Here $\Bn$ denotes the unit outer normal to $\Gamma$. We also use the usual Hilbert space $\BH(\Div,\Omega)$ indicating square integrable divergence.
Denote  by
\[\VD=\BH(\Div,\Omega),~\mathrm{S}=L^2(\Omega),~\VC=\zbHcurl,~\mathrm{R}=H_0^1(\Omega)\]

Next we introduce the continuous mixed variational formulation for the $\mathrm{CPP^3}$-model \eqref{eq:steady-mhd}.
For well posedness we also introduce an extra Lagrange multiplier $r \in H_0^1(\Omega)$ as in\cite{sch04}.
\begin{center}
\fbox{
\parbox{0.90\textwidth}{
Find $\BJ\in \VD$, $\phi \in \mathrm{S}$, $\BA \in \VC$ and
$r \in  \mathrm{R}$ such that the following weak formulation holds
\begin{subequations}\label{weaka}
\begin{align}
\eta(\BJ,\varphibf) - (\phi,\Div\varphibf) - (\Bw\times\curl\BA, \varphibf)   &= (\Bf, \varphibf),\label{weaka:J}\\
-(\Div\BJ, \psi) &= 0, \label{weaka:Phi} \\
-(\BJ, \Ba) + \nu_m(\curl\BA, \curl\Ba) + (\nabla r, \Ba) &= (\Bg, \Ba), \label{weaka:A}\\
(\BA, \nabla s) &= 0. \label{weaka:r}
\end{align}
\end{subequations}
for any $(\varphibf, \psi, \Ba, s) \in \VD \times \mathrm{S} \times \VC \times \mathrm{R}$.
}
}
\end{center}
We refer to the weak formulation \eqref{weaka} by $\mathrm{CPP^3M}$ formulation, where "M" means multiplier.
Based on \eqref{weaka}, we introduce the bilinear forms
\[a_1(\BJ,\varphibf)=\eta\int_{\Omega} \BJ\cdot\varphibf,\qquad a_2(\BA,\Ba) = \nu_m\int_{\Omega}\curl\BA\cdot\curl\Ba\]
\[d_1(\BJ,\phi)=-\int_{\Omega}\phi\Div\BJ,\qquad d_2(\BA,r)=\int_{\Omega} \BA\cdot\nabla r\]
and the trilinear form
\[c(\Bw;\BA,\varphibf)= - \int_{\Omega} \Bw\times\curl\BA\cdot\varphibf\]

Next we state the energy's law for the continuous variational formulation. It is summarized in the following theorem.
\begin{theorem}[Energy's Law]
The solutions of \eqref{weaka} satisfy the energy relation
\begin{subequations}\label{eq:law}
\begin{align}
\eta \|\BJ\|_{\BL^2}^2 & = (\Bf, \BJ) - (\BJ\times\BB,\Bw),\\
\nu_m\|\BB\|_{\BL^2}^2 & = (\Bg, \BA) + (\BJ, \BA).
\end{align}
\end{subequations}
with $\BB = \curl\BA \in \BH_0(\Div, \Omega)$ and $\|\Div\BJ\|_{L^2} = 0$.
\end{theorem}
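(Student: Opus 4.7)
The plan is to derive the two energy identities by testing the four equations of the weak formulation \eqref{weaka} against the solution itself and exploiting the skew-symmetry of the scalar triple product. No regularity is needed beyond what the function spaces $\VD\times\mathrm{S}\times\VC\times\mathrm{R}$ already provide.

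First I would extract the divergence-free condition on $\BJ$. Since $\Div\BJ\in L^2(\Omega)=\mathrm{S}$, I can take $\psi=\Div\BJ$ in \eqref{weaka:Phi} to conclude $\|\Div\BJ\|_{L^2}=0$. Similarly, testing \eqref{weaka:r} with $s=r$ gives $(\BA,\nabla r)=0$, which will allow me to kill the Lagrange-multiplier term when I test \eqref{weaka:A} with $\Ba=\BA$. The membership $\BB=\curl\BA\in\BH_0(\Div,\Omega)$ follows from two facts: $\Div\BB=\Div\curl\BA=0$ in $\Omega$, and since $\BA\in\VC=\zbHcurl$ has vanishing tangential trace, the identity $\BB\cdot\Bn=\Div_\Gamma(\BA\times\Bn)$ forces $\BB\cdot\Bn=0$ on $\Gamma$.

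Next, I would derive the first identity by choosing $\varphibf=\BJ$ in \eqref{weaka:J}. The term $-(\phi,\Div\BJ)$ vanishes by step one, leaving
\begin{equation*}
\eta\,\|\BJ\|_{\Ltwov}^{2} - (\Bw\times\curl\BA,\BJ) = (\Bf,\BJ).
\end{equation*}
Then I invoke the scalar triple product identity $(\Bw\times\BB)\cdot\BJ = -(\BJ\times\BB)\cdot\Bw$, valid pointwise, to rewrite $(\Bw\times\curl\BA,\BJ)=-(\BJ\times\BB,\Bw)$. Rearranging gives the claimed first relation. For the second identity, I choose $\Ba=\BA$ in \eqref{weaka:A}; the multiplier term $(\nabla r,\BA)$ vanishes by the orthogonality established above, and $\nu_m(\curl\BA,\curl\BA)=\nu_m\|\BB\|_{\Ltwov}^{2}$, so rearrangement yields the stated equality.

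I do not expect a genuine obstacle here: the proof is essentially a bookkeeping exercise. The one point that requires a moment of care is the sign convention in the scalar triple product identity, which must line up correctly with the sign of the trilinear term $c(\Bw;\BA,\varphibf)=-\int_\Omega \Bw\times\curl\BA\cdot\varphibf$ used in \eqref{weaka:J}. Aside from that, the argument is a direct application of the Galerkin-type substitutions $(\varphibf,\psi,\Ba,s)=(\BJ,\Div\BJ,\BA,r)$ into the four equations of the weak formulation.
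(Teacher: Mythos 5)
Your proposal is correct and follows essentially the same route as the paper: test the four equations with the solution itself and use the cyclic (skew-symmetric) property of the scalar triple product to rewrite the convective term. The only cosmetic differences are that you kill the multiplier term via $s=r$ in \eqref{weaka:r} (rather than the paper's choice $\Ba=\nabla r$ to conclude $r\equiv 0$) and you obtain $\Div\BJ=0$ by taking $\psi=\Div\BJ$ instead of $\psi=-\phi$; both variants yield the same cancellations, and your justification of $\BB=\curl\BA\in\BH_0(\Div,\Omega)$ is in fact slightly more complete than the paper's.
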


\begin{proof}
Firstly, let $\Ba = \nabla r$ in \eqref{weaka:A} and note that $r$ has zero boundary conditions, then $r\equiv0$ in $\Omega$.
Let $\varphibf = \BJ, \psi = - \phi$ and $\Ba = \BA$ in \eqref{weaka}, one obtains
\begin{equation}\label{energyJ}
\eta\|\BJ\|_{\BL^2}^2 - (\Bw\times\BB, \BJ) = (\Bf, \BJ)
\end{equation}
and
\begin{equation}\label{energyA}
-(\BJ, \BA) + \nu_m(\curl\BA, \curl\BA) = (\Bg, \BA)
\end{equation}
Denote by
\[\BB = \curl\BA \in \BH_0(\Div,\Omega)\]
Note that $\Div\BJ \in L^2(\Omega)$ and \eqref{weaka:Phi}, then $\|\Div\BJ\|_{L^2} = 0$.
This completes the proof.$\hfill{}$
\end{proof}

\section{Discrete scheme and preconditioning}
Let $\mathcal{T}_h$ be a shape-regular tetrahedral triangulation of $\Omega$, with $h$ the grid size if the partition is quasi-uniform.
We will use finite element spaces which are all conforming, namely
\begin{equation*}
\VD_h \subseteq \VD,~~\mathrm{S}_h \subseteq \mathrm{S},~~\VC_h \subseteq \VC,~~\mathrm{R}_h \subseteq \mathrm{R}
\end{equation*}
For $\VD_h$ we use the $\BH(\Div,\Omega)$-conforming piecewise linear finite element in the second family\cite{xin2013}
\[\VD_h = \{\varphibf_h\in \BH(\Div,\Omega): \varphibf_h|_K\in\BP_1(K), ~ K\in \mathcal{T}_h\}\]
For $\mathrm{S}_h$ we use the piecewise constants finite element
\[\mathrm{S}_h = \{\psi_h\in L^2(\Omega): \psi_h|_K\in P_0(K), ~ K\in \mathcal{T}_h\}\]
The finite element for $\BA$ is the first order N\'{e}d\'{e}lec edge element space in the second family\cite{ne86}
\[\VC_h = \{\Ba_h\in \zbHcurl: \Ba_h|_K\in\BP_1(K),~ K\in \Ct_h\}\]
The finite element space $\mathrm{R}_h$ is defined by
\[\mathrm{R}_h = \{s_h \in H_0^1(\Omega): s_h|_K \in P_2(K),~K \in \Ct_h\}\]
\subsection{A novel mixed finite element scheme}
In this subsection, we will present a mixed finite element scheme to solve the continuous $\mathrm{CPP^3M}$ formulation \eqref{weaka}.
The discrete scheme reads
\begin{center}
\fbox{
\parbox{0.90\textwidth}{
Find $\BJ_h\in \VD_h$, $\phi_h \in \mathrm{S}_h$, $\BA_h \in \VC_h$ and
$r_h \in  \mathrm{R}_h$ such that the following weak formulation holds
\begin{subequations}\label{full-dis}
\begin{align}
a_1(\BJ_h,\varphibf_h)+ d_1(\varphibf_h, \phi_h) + c(\Bw;\BA_h, \varphibf_h) &= (\Bf, \varphibf_h),\label{full-dis:J}\\
d_1(\BJ_h, \psi_h) &= 0,\label{full-dis:Phi}\\
-(\BJ_h, \Ba_h) + a_2(\BA_h, \Ba_h) + d_2(\Ba_h, r_h)                        &= (\Bg, \Ba_h),\label{full-dis:A}\\
d_2(\BA_h, s_h)    &= 0.\label{full-dis:r}
\end{align}
\end{subequations}
for any $(\varphibf_h, \psi_h, \Ba_h, s_h) \in \VD_h \times \mathrm{S}_h \times \VC_h \times \mathrm{R}_h$.
}
}
\end{center}
Because $\Div \VD_h \subseteq \mathrm{S}_h$ and $\BB_h = \curl\BA_h$, we naturally have the following desired precisely divergence-free conditions or
the nice conservative properties
\begin{equation}\label{eq:div-free}
\Div\BJ_h = 0, \quad \Div\BB_h = 0,\quad \mathrm{in}~~\Omega.
\end{equation}
For $\phi_h$ we use piecewise constants approximations, and $\BA_h$ linear approximations, thus we cannot use the transform \eqref{eq:trans} to
recover the original electrical field $\BE_h$. Instead, using Ohm's law is a smart way to do this, namely
\begin{equation}\label{eq:reE}
\BE_h = \eta\BJ_h - \Bw\times\curl\BA_h
\end{equation}
where $\eta$ is the resistivity. In this way, we have obtained all the desired field quantities $\BJ_h, \BB_h$ and $\BE_h$ with the same accuracy.
As a by-product, we also have the electric potential $\phi_h$ with piecewise constants approximation.
We emphasize that the discrete scheme \eqref{full-dis} also has a energy's law which is similar to the continuous case.
See the following theorem.
\begin{theorem}[Discrete Energy's Law]
The solutions of \eqref{full-dis} satisfy the energy relation,
\begin{subequations}\label{eq:lawh}
\begin{align}
\eta \|\BJ_h\|_{L^2}^2 & = (\Bf, \BJ_h) - (\BJ_h\times\BB_h,\Bw),\\
\nu_m\|\BB_h\|_{L^2}^2 & = (\Bg, \BA_h) + (\BJ_h, \BA_h).
\end{align}
\end{subequations}
with $\BB_h = \curl\BA_h \in \BH_0(\Div,\Omega)$ and $\|\Div\BJ_h\|_{L^2} = 0$.
\end{theorem}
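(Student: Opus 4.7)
The plan is to mirror the continuous energy computation verbatim, exploiting that the discrete formulation \eqref{full-dis} was designed so that every test-function substitution used in the continuous proof remains admissible. First, I would take $\varphibf_h = \BJ_h$ in \eqref{full-dis:J} and $\psi_h = \phi_h$ in \eqref{full-dis:Phi}; the second equation then gives $d_1(\BJ_h,\phi_h) = 0$, so the $d_1$-coupling in the first cancels, leaving $\eta\|\BJ_h\|_{L^2}^2 + c(\Bw;\BA_h,\BJ_h) = (\Bf,\BJ_h)$. The scalar triple-product identity rewrites $c(\Bw;\BA_h,\BJ_h) = -(\Bw\times\BB_h,\BJ_h) = (\BJ_h\times\BB_h,\Bw)$ with $\BB_h := \curl\BA_h$, which is the first claimed identity.

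For the second identity, I would take $\Ba_h = \BA_h$ in \eqref{full-dis:A} together with $s_h = r_h$ in \eqref{full-dis:r}; the latter immediately forces $d_2(\BA_h,r_h) = 0$, so the Lagrange-multiplier term drops out without any need to first argue that $r_h\equiv 0$ (this sidesteps the only subtle point in the continuous proof, where an implicit divergence-free hypothesis on $\Bg$ is used to conclude $r\equiv 0$). Using $a_2(\BA_h,\BA_h) = \nu_m\|\curl\BA_h\|_{L^2}^2 = \nu_m\|\BB_h\|_{L^2}^2$, what remains is exactly the second asserted identity.

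For the two auxiliary claims: $\BB_h\in\BH_0(\Div,\Omega)$ holds because $\Div\curl\equiv 0$ pointwise, and because $\BA_h\times\Bn|_\Gamma = \mathbf{0}$ implies $\Bn\cdot\curl\BA_h = \Div_\Gamma(\BA_h\times\Bn) = 0$ on $\Gamma$, which was the motivating trace identity given earlier in the paper. The pointwise equality $\|\Div\BJ_h\|_{L^2} = 0$ comes from the commuting property $\Div\VD_h\subseteq\mathrm{S}_h$ (built into the second-family $\BH(\Div)$ element paired with piecewise constants): the function $\psi_h := \Div\BJ_h$ is an admissible test in \eqref{full-dis:Phi}, giving $\|\Div\BJ_h\|_{L^2}^2 = 0$ directly. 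No real obstacle is anticipated: the element pair $(\VD_h,\mathrm{S}_h)$ was chosen precisely so the discrete divergence constraint is as strong as the continuous one, and the block decoupling between $(\BJ_h,\phi_h)$ and $(\BA_h,r_h)$ in the energy calculation is identical to the continuous case.
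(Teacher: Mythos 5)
Your proof is correct and follows essentially the route the paper intends (the paper omits the discrete proof, deferring to the continuous one, which uses the same substitutions $\varphibf=\BJ$, $\psi=-\phi$, $\Ba=\BA$). The one genuine difference is your handling of the Lagrange multiplier: the paper's continuous argument first sets $\Ba=\nabla r$ to conclude $r\equiv 0$, a step that silently requires $(\Bg,\nabla s)=0$ for all admissible $s$ (and, at the discrete level, would additionally need $\nabla \mathrm{R}_h\subseteq\VC_h$), whereas you simply take $s_h=r_h$ in \eqref{full-dis:r} to get $d_2(\BA_h,r_h)=0$ directly. Your variant is cleaner, needs no hypothesis on $\Bg$, and proves exactly what the energy identity requires; the paper's variant buys the stronger (but here unneeded) conclusion that the multiplier vanishes. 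Your justifications of $\Div\BJ_h=0$ via $\Div\VD_h\subseteq\mathrm{S}_h$ and of $\BB_h\in\BH_0(\Div,\Omega)$ via the surface-divergence trace identity are also sound.
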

\begin{proof}
The proof is similar to the continuous case. We omit it here.
\end{proof}
\vspace{5mm}

\textbf{Remark 2.} The equation \eqref{eq:lawh} can be seen as the discretization of \eqref{eq:law}. And since $\Div\BJ_h$
and $\Div\BB_h$ are precisely zero in $\Omega$, the distinctive feature of our new finite element scheme \eqref{full-dis}
is that it inherits all the conservative properties of the original continuous problem \eqref{eq:mhd}.

\subsection{Block preconditioning method}
After finite element discretization, we will get the linear algebraic system
\begin{equation}\label{Axb}
\mathcal{A}\Vx = \Vb
\end{equation}
where the vector $\Vx$ consists of the degrees of freedom for $(\BJ_h, \phi_h, \BA_h, r_h)$.
The matrix $\mathcal{A}$ could be written in the following block form
\begin{equation*}
\mathcal{A}=\left(
              \begin{array}{cccc}
              \BM   &G^T      &\BK   &0\\
              G     &0        &0     &0\\
              \BX   &0        &\BF   &B^T\\
              0     &0        &B     &0\\
              \end{array}
            \right)
\end{equation*}
where
\begin{align*}
&\BM_{ij}=\eta(\varphibf_j, \varphibf_i), \quad \forall \varphibf_i, \varphibf_j \in \VD_h\\
&G^T_{ij}=-(\psi_j, \Div\varphibf_i), \quad \forall \varphibf_i \in \VD_h, \psi_j \in \mathrm{S}_h \\
&\BK_{ij}=(\curl\Ba_j\times\Bw,\varphibf_i), \quad \forall \varphibf_i \in \VD_h, \forall\Ba_j \in \VC_h\\
&\BX_{ij}=-(\varphibf_j, \Ba_i), \quad \forall \Ba_i \in \VC_h, \forall\varphibf_j \in \VD_h\\
&\BF_{ij}=\nu_m(\curl\Ba_j, \curl\Ba_i), \quad \forall \Ba_i, \Ba_j \in \VC_h\\
&B^T_{ij}=(\nabla s_j, \Ba_i), \quad \forall \Ba_i \in \VC_h, \forall  s_j \in \mathrm{R}_h
\end{align*}
For multi-physics problems, block preconditioning is famous. Now we attempt to give an efficient block preconditioner to devise an iterative solver for the
complicated saddle systems. Before we show our preconditioner, we point out that in\cite{phil2014},
Phillips and Elman constructed an efficient block preconditioner for steady MHD kinematics equations
with $\BB$ and an extra multiplier $r$ as variables. Their basic model is a sub-block of the model in\cite{sch04}, so $\BB_h$ is only weakly divergence free
in their work.

Firstly, we note that the matrix $\BX$ is zero-order term, so maybe we can drop it directly and study preconditioner for
the following modified matrix
\begin{equation}\label{mat:A1}
\mathcal{A}_1=\left(
              \begin{array}{cccc}
              \BM &G^T      &\BK   &0\\
              G   &0        &0     &0\\
              0   &0        &\BF   &B^T\\
              0   &0        &B     &0\\
              \end{array}
            \right)
\end{equation}
For \eqref{mat:A1}, we only need to consider the block preconditioner for the following two saddle systmes
\begin{equation}\label{mat:JA}
\mathcal{A}_J=\left(
  \begin{array}{cc}
  \BM   &G^T \\
  G     &0   \\
  \end{array}
\right),\quad \mathcal{A}_a = \left(
                \begin{array}{cc}
                \BF  &B^T \\
                B    &0   \\
                \end{array}
              \right).
\end{equation}
From the reference\cite{gre07, vassi1996, ma11, ki10}, the following choices should be good candidates
\begin{equation}\label{pre:JA}
\left(
  \begin{array}{cc}
  \widehat{\BM}  &0 \\
  0           &\sigma Q           \\
  \end{array}
\right),\quad \left(
                \begin{array}{cc}
                \widehat{\BF}  &0 \\
                0           &L \\
                \end{array}
              \right).
\end{equation}
where
\[\widehat{\BM}_{ij}:=\sigma^{-1}(\varphibf_j, \varphibf_i) + \sigma^{-1}(\Div\varphibf_j,\Div\varphibf_i),\quad Q_{ij}:=(\psi_j,\psi_i)\]
\[\widehat{\BF}_{ij}:=\textsf{Rm}^{-1}(\curl\Ba_j, \curl\Ba_i) + (\Ba_j, \Ba_i),\quad L_{ij}:=(\nabla s_j, \nabla s_i)\]
However, here we will give some further modifications to improve \eqref{pre:JA}. These improvements are based on augmentation, approximate
block decompositions and the commutativity of the underlying continuous operators.

We start by considering the preconditioner for $(\BJ_h, \phi_h)$ part. Note that ($\eta = \sigma^{-1}$)
\begin{equation}\label{AL:J}
\left(
    \begin{array}{cc}
      I & \eta G^T Q^{-1} \\
      0 & I \\
    \end{array}
  \right)\left(
           \begin{array}{cc}
           \BM & G^T \\
            G & 0 \\
           \end{array}
         \right) = \left(
                     \begin{array}{cc}
                       \BM + \eta G^T Q^{-1} G  & G^T \\
                       G & 0 \\
                     \end{array}
                   \right)
\end{equation}
and
\[\left(
  \begin{array}{cc}
  \BM + \eta G^T Q^{-1} G  & G^T \\
  G & 0 \\
  \end{array}
\right)=\left(
          \begin{array}{cc}
            I & 0 \\
            G\widetilde{\BM}^{-1} & I \\
          \end{array}
        \right)\left(
                 \begin{array}{cc}
                   \widetilde{\BM} & G^T \\
                   0               & - G\widetilde{\BM}^{-1} G^T \\
                 \end{array}
               \right) \triangleq \mathcal{L}_J\mathcal{U}_J
\]
with $\widetilde{\BM} = \BM + \eta G^T Q^{-1} G$.
The underlying continuous operator of $\widetilde{\BM} $ is
\[\eta \mathrm{Id} - \eta \nabla\Div\]
so $\widehat{\BM}$ should be an ideal approximation for $\widetilde{\BM}$.
The operator of $G\widetilde{\BM}^{-1} G^T$ is as follows
\[-\Div(\eta \mathrm{Id}- \eta\nabla\Div)^{-1}\nabla\]
Because Laplace operator can commutate with the gradient opterator
\[(\eta \mathrm{Id} - \eta\nabla\Div)\nabla = \eta\nabla(\mathrm{Id} - \Delta)_\phi
\Rightarrow \sigma \nabla(\mathrm{Id} - \Delta)_\phi^{-1} = (\eta \mathrm{Id} - \eta\nabla\Div)^{-1}\nabla\]
we obtain
\[-\Div(\eta \mathrm{Id} - \eta\nabla\Div)^{-1}\nabla = -\sigma\Div\nabla (\mathrm{Id} - \Delta)_\phi^{-1} \approx  \sigma \mathrm{Id}\]
From above we can approximate $\mathcal{U}_J$ be
\begin{equation}\label{preJ1}
\left(
                 \begin{array}{cc}
                   \widehat{\BM} & G^T \\
                   0 & -\sigma Q \\
                 \end{array}
               \right)
\end{equation}
Due to \eqref{AL:J} and \eqref{preJ1},
\begin{equation}\label{preJ2}
\mathcal{P}_J = \left(
    \begin{array}{cc}
    I & -\eta G^T Q^{-1} \\
    0 & I \\
    \end{array}
  \right)\left(
  \begin{array}{cc}
  \widehat{\BM} & G^T \\
  0 & -\sigma Q \\
  \end{array}
  \right) = \left(
            \begin{array}{cc}
            \widehat{\BM} & 2G^T \\
            0 & -\widehat{Q} \\
            \end{array}
            \right)
\end{equation}
should be a good preconditioner for $\mathcal{A}_J$ with $\widehat{Q} = \sigma Q$.

Next we consider the block preconditioning for $(\BA_h, r_h)$ part. Note that
\begin{equation}\label{AL:A}
\left(
    \begin{array}{cc}
    I & B^T L^{-1} \\
    0 & I \\
    \end{array}
  \right)\left(
           \begin{array}{cc}
           \BF & B^T \\
           B & 0 \\
           \end{array}
         \right) = \left(
                     \begin{array}{cc}
                     \BF + B^T L^{-1} B  & B^T \\
                     B & 0 \\
                     \end{array}
                   \right)
\end{equation}
and
\[\left(
  \begin{array}{cc}
  \BF + B^T L^{-1} B  & B^T \\
  B & 0 \\
  \end{array}
\right)=\left(
          \begin{array}{cc}
          I & 0 \\
          B\widetilde{\BF}^{-1} & I \\
          \end{array}
        \right)\left(
                 \begin{array}{cc}
                 \widetilde{\BF} & B^T \\
                 0 & - B\widetilde{\BF}^{-1} B^T \\
                 \end{array}
               \right) \triangleq \mathcal{L}_a\mathcal{U}_a
\]
with $\widetilde{\BF} = \BF + B^T L^{-1} B$. As above we want to give a good matrix approximation for $\mathcal{U}_a$.
From the reference\cite{gre07}, $\widehat{\BF}$ is good approximation of $\widetilde{\BF}$. Consider the continuous operator
of $B\widetilde{\BF}^{-1} B^T$
\[-\Div[\nu_m\curl\curl + \nabla(-\Delta)^{-1}(-\Div)]^{-1}\nabla\]
Note that
\[[\nu_m\curl\curl + \nabla(-\Delta)^{-1}(-\Div)]\nabla = \nabla\]
Thus one have
\[[\nu_m\curl\curl + \nabla(-\Delta)^{-1}(-\Div)]^{-1}\nabla = \nabla\]
and further
\[-\Div[\nu_m\curl\curl + \nabla(-\Delta)^{-1}(-\Div)]^{-1}\nabla = -\Div\nabla = -\Delta_r\]
From the above equation, the matrix $L$ should be a good approximation for $B\widetilde{\BF}^{-1} B^T$.
And a reasonable approximation of $\mathcal{U}_a$ is as follows
\begin{equation}\label{preA1}
\left(
\begin{array}{cc}
\widehat{\BF} &B^T \\
0             &-L  \\
\end{array}
\right)
\end{equation}
And due to \eqref{AL:A}
\begin{equation}\label{preA2}
\mathcal{P}_a = \left(
    \begin{array}{cc}
      I & -B^T L^{-1} \\
      0 & I           \\
    \end{array}
  \right)\left(
  \begin{array}{cc}
  \widehat{\BF} & B^T \\
  0             & -L \\
  \end{array}
  \right) = \left(
  \begin{array}{cc}
  \widehat{\BF} & 2B^T \\
  0             & -L   \\
  \end{array}
  \right)
\end{equation}
should be a good preconditioner for $\mathcal{A}_a$.

Then we get our preconditioner (right-preconditioning) for $\mathcal{A}$
provided that $\mathcal{A}_1$ in \eqref{mat:A1} is a good preconditioner for $\mathcal{A}$
\begin{equation}\label{pre1:A}
\mathcal{P} =\left(
              \begin{array}{cccc}
              \widehat{\BM}   &2G^T         &\BK           &0    \\
              0               &-\widehat{Q} &0             &0    \\
              0               &0            &\widehat{\BF} &2B^T \\
              0               &0            &0             &-L   \\
              \end{array}
            \right)
\end{equation}
Next we give the preconditioned FGMRES iterative method (Algorithm \ref{alg:gmres}) for practical implementation.
\emph{}For convenience in notation, given a general vector $\Vx$ which has the same size as one column vector of $\mathcal{A}$,
we let $(\Vx_J, \Vx_\phi, \Vx_A, \Vx_r)$ be the vectors which consist of entries of $\Vx$ and correspond to
$(\BJ_h, \phi_h, \BA_h, r_h)$ respectively.

\begin{algorithm}[Preconditioned FGMRES Algorithm]\label{alg:gmres}
{\sf
Given the tolerances $\varepsilon\in (0,1)$ and $\varepsilon_0\in (0,1)$, the maximal number of FGMRES iterations $N>0$, and the
initial guess $\mathbf{x}^{(0)}$ for the solution of \eqref{Axb}. Set $k=0$ and compute the residual vector
\begin{align*}
\mathbf{r}^{(k)} = \mathbf{b}-\mathcal{A}\mathbf{x}^{(k)}.
\end{align*}

\noindent
While $\left(k<N\;\; \&\;\; \N{\mathbf{r}^{(k)}}_2 >\varepsilon\N{\mathbf{r}^{(0)}}_2\right)$ do
\begin{enumerate}
  \item Solve $L \Ve_r = -\Vr^{(k)}_r$ by preconditioned CG method with tolerance $\varepsilon_0$.
  The preconditioner is the algebraic multigrid method (AMG)\cite{hen02}.
  \item Solve $\widehat{\BF}\Ve_A = \Vr^{(k)}_A - 2B^T\Ve_r$ with tolerance $\varepsilon_0$.
  We use preconditioned CG method with the Hiptmair-Xu preconditioner\cite{hip07}.
  \item Solve $\widehat{Q} \Ve_ \phi = -\Vr_{\phi}^{(k)}$ by $2$ CG iterations with the diagonal preconditioning.
  \item Solve $\widehat{\BM} \Ve_J = \Vr^{(k)}_J - 2G^T \Ve_{\phi} - \BK\Ve_A$ using  MUMPS direct solver\cite{mumps}.
  \item Update the solutions in FGMRES iteration to obtain $\mathbf{x}^{(k+1)}$.
  \item Set $k:=k+1$ and compute the residual vector $\mathbf{r}^{(k)} = \mathbf{b}-\mathcal{A}\mathbf{x}^{(k)}$.
\end{enumerate}
End while.}
\end{algorithm}
\vspace{5mm}

\textbf{Remark 3.} In Algorithm \ref{alg:gmres} we mainly concern the outer iterations, so we fix the relative tolerance $\varepsilon_0$ for the inner solvers.
Because iterative methods are used as a preconditioner, we we use FGMRES\cite{saad1993} solver rather than GMRES solver.
For the sub-problem associated with $\widehat{\BM}$, there has already existed efficient preconditioners instead of expensive direct solver here.
We refer to\cite{arnold97} for simple multigrid and domain decomposition preconditioners and \cite{hip07} for auxiliary space preconditioners based
on AMG algorithm for negative Laplace problem. In future we will explore using more efficient inner solvers.

\textbf{Remark 4.} In the continuous case, we have the following equation
\begin{equation}\label{eq:phi1}
\eta\BJ = -\partial_t\BA - \nabla\phi + \Bw \times \curl \BA + \Bf
\end{equation}
Applying the divergence operator to the \eqref{eq:phi1} with $\Div\BA = 0$, we obtain
\[-\Delta\phi + \Div(\Bw\times\curl\BA) + \Div\Bf = 0, \quad\phi = 0~~\hbox{on}\;\;\Gamma\]
Thus given $\Bw$ and $\curl\BA_h$ one can solve the above homogenous elliptic equations to improve the precision of the scalar potential $\phi_h$.
For example, solve the following variational problem using FEM with node element
\begin{equation}
\hbox{Find}~~\phi \in H_0^1(\Omega),\quad \mathrm{s.t.}, \quad(\nabla\phi, \nabla\psi)
= (\Bw\times\curl\BA_h, \nabla\psi) + (\Bf, \nabla\psi), \quad\forall \psi \in H_0^1(\Omega)
\end{equation}
\section{Numerical experiments}
In this section, we present three numerical examples to verify the divergence-free feature of the discrete solutions,
the convergence rate of finite element solutions and the performance of the proposed block preconditioner.
Due to $\BB_h = \curl\BA_h \in \BH(\Div,\Omega)$ we naturally have $\|\Div\BB_h\|_{L^2} = 0$.
So we will only report the divergence of the discrete current density $\BJ_h$.

The computational domain $\Omega$ is unit cube $(0,1)^3$.
The code is developed based on the finite element package: \textbf{Parallel Hierarchical Grid (PHG)}\cite{zh09-1,zh09-2}.
We set the tolerances by $\varepsilon = 10^{-10}$ and $\varepsilon_0 = 10^{-3}$ in Algorithm \ref{alg:gmres}.
We use PETSc's FGMRES solver and set
the maximal iteration number by $N=500$\cite{petsc}.

\begin{example}[Precision test I: $\Bw\equiv \mathbf{0}$]\label{ex1}
Set $\sigma = {\sf Rm} = 1$, and use the following analytic solutions
\[\BJ = (\sin y, 0, x^2),~\phi = z,~\BA = (0, \cos x , 0),~r=0\]
\end{example}
\begin{table}[!htb]
  \centering
  \caption{Convergence rate of the finite element solutions (Example \ref{ex1}).}
  \label{eddyEnergyError}
  \begin{tabular*}{14cm}{@{\extracolsep{\fill}}|c|c|c|c|c|c|c|}
  \hline
  $h$    &$\|\BJ-\BJ_h\|_{\BL^2}$ &order  &$\|\phi - \phi_h\|_{L^2}$  &order &$\|\BA-\BA_h\|_{\BH(\curl)}$ &order\\ \hline
  0.8660 &1.5041e-02              &---    &1.0206e-01   &---    &9.8055e-02 &---    \\    \hline
  0.4330 &3.7629e-03              &1.9990 &5.1031e-02   &1.000  &4.8103e-02 &1.0275 \\    \hline
  0.2165 &9.4089e-04              &1.9997 &2.5516e-02   &1.000  &2.3779e-02 &1.0164 \\    \hline
  0.1083 &2.3523e-04              &2.0000 &1.2758e-02   &1.000  &1.1821e-02 &1.0083 \\    \hline
  \end{tabular*}
\end{table}

\begin{table}[!htb]
  \centering
  \caption{Divergence of $\BJ_h$ and $\BL^2$ error of $\BA_h$ (Example \ref{ex1}).}
  \label{eddyL2Error}
  \begin{tabular*}{10cm}{@{\extracolsep{\fill}}|c|c|c|c|}
  \hline
  $h$     &$\|\Div\BJ_h\|_{L^2}$   &$\|\BA-\BA_h\|_{\BL^2}$ &order   \\\hline
  0.8660  &\textbf{3.2878e-12}              &1.2353e-02              &---     \\\hline
  0.4330  &\textbf{1.4178e-11}              &3.2468e-03              &1.9278  \\\hline
  0.2165  &\textbf{4.5646e-11}              &8.1712e-04              &1.9904  \\\hline
  0.1083  &\textbf{5.8225e-11}              &2.0336e-04              &2.0065  \\\hline
  \end{tabular*}
\end{table}

From Table \ref{eddyEnergyError} and Table \ref{eddyL2Error}, we know that the following optimal convergence rates are obtained
\begin{equation}
\|\BJ-\BJ_h\|_{\BL^2} \sim O(h^2), ~~ \|\phi - \phi_h\|_{L^2} \sim O(h).
\end{equation}
\begin{equation}
\|\BA-\BA_h\|_{\BH(\curl)} \sim O(h),~~ \|\BA-\BA_h\|_{\BL^2} \sim O(h^2).
\end{equation}
if the the prescribed velocity field $\Bw$ is identically zero in $\Omega$. And the divergence of $\BJ_h$ is almost zero.

\begin{example}[Precision test II: $\Bw = (x,y,z)$]\label{ex2}
Set $\sigma = {\sf Rm} = 1$, and use the same analytic solutions as Example \ref{ex1}
\[\BJ = (\sin y, 0, x^2),~\phi = z,~\BA = (0, \cos x , 0),~r=0\]
\end{example}
\begin{table}[!htb]
  \centering
  \caption{Convergence rate of the finite element solutions (Example \ref{ex2}).}
  \label{EnergyError}
  \begin{tabular*}{14cm}{@{\extracolsep{\fill}}|c|c|c|c|c|c|c|}
  \hline
  $h$    &$\|\BJ-\BJ_h\|_{\BL^2}$   &order &$\|\phi - \phi_h\|_{L^2}$  &order &$\|\BA-\BA_h\|_{\BH(\curl)}$ &order\\ \hline
  0.8660 &5.9797e-02   &---     &1.0208e-01   &---     &9.8057e-02  &---     \\\hline
  0.4330 &2.6435e-02   &1.1776  &5.1034e-02   &1.0002  &4.8104e-02  &1.0275  \\\hline
  0.2165 &1.2526e-02   &1.0775  &2.5516e-02   &1.0001  &2.3780e-02  &1.0164  \\\hline
  0.1083 &6.1235e-03   &1.0325  &1.2758e-02   &1.0000  &1.1821e-02  &1.0084  \\\hline
  \end{tabular*}
\end{table}

\begin{table}[!htb]
  \centering
  \caption{Divergence of $\BJ_h$ and $\BL^2$ error of $\BA_h$ (Example \ref{ex2}).}
  \label{L2Error}
  \begin{tabular*}{8cm}{@{\extracolsep{\fill}}|c|c|c|c|}
  \hline
  $h$       &$\|\Div\BJ_h\|_{L^2}$   &$\|\BA-\BA_h\|_{\BL^2}$ &order    \\\hline
  0.8660    &\textbf{6.3558e-11}              &1.2198e-02              &---      \\\hline
  0.4330    &\textbf{1.5463e-11}              &3.2073e-03              &1.9272   \\\hline
  0.2165    &\textbf{5.5790e-11}              &8.0656e-04              &1.9915   \\\hline
  0.1083    &\textbf{6.7967e-11}              &2.0070e-04              &2.0067   \\\hline
  \end{tabular*}
\end{table}

From Table \ref{EnergyError} and Table \ref{L2Error}, we know that optimal convergence rates for $\phi, \BA$
are obtained. But the convergence rate of $\BJ_h$ is only one order, which is not optimal. The reason is
that $\curl\BA_h$ only has 1 order precision, if the induced electrical field $\Bw\times\curl\BA_h$ is not zero.
To get the optimal convergence rate for the current density, we can use the second order edge element to
discretize $\BA$. Moreover, the divergence of $\BJ_h$ is also almost zero, which indicates the divergence-free
feature of our FE scheme \eqref{full-dis}.

Because in the present paper, we mainly focus on the conservative properties of the discrete current density $\BJ_h$
and magnetic induction $\BB_h$
\[\Div\BJ_h = 0, \quad \Div\BB_h = 0.\]
and the convergence of the solutions. The higher order element discretization
for vector potential $\BA$ will be the future investigation.

\begin{example}[Performance of the block preconditioners]\label{ex3}
Set $\sigma = 1$, and prescribe the velocity field by
\begin{equation}
\Bw = \left(
        \begin{array}{c}
          -16x(1-x)y(1-y)\sin \theta \\
          16x(1-x)y(1-y)\cos \theta \\
          0 \\
        \end{array}
      \right)
\end{equation}
where $\theta$ is the angle between vector $(x,y,0)$ and the positive direction of $x$-axis.
And we know that $\Bw\cdot\Bn \equiv 0$ on the boundary. The applied magnetic field $\BB_s = (1,0,0)$ with $\BA_s = (0, 0, y)$. The source terms
\[\Bf = \Bw\times\BB_s, \quad \Bg = - \nu_m\curl\BB_s  = \mathbf{0}.\]
and the boundary conditions
\[\phi = 0,\quad \BA\times\Bn = \mathbf{0},\quad \mathrm{on}~~\Gamma.\]
\end{example}
We know that the setting actually corresponds to PEC boundary conditions
for the original systems \eqref{eq:mhd}. The distribution of $\Bw$ on cross section $z=0.5$ is shown in Fig.\ref{fig:wh}.
\begin{figure}[!htb]
  \centering
  \includegraphics[width=0.45\textwidth]{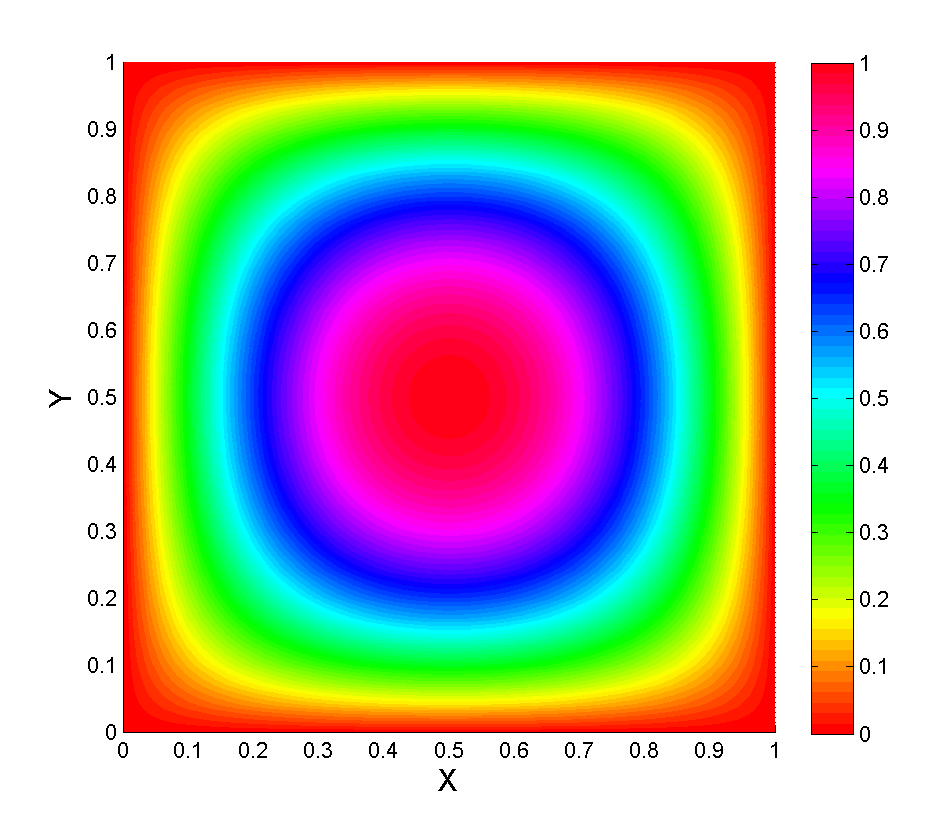}
  \includegraphics[width=0.45\textwidth]{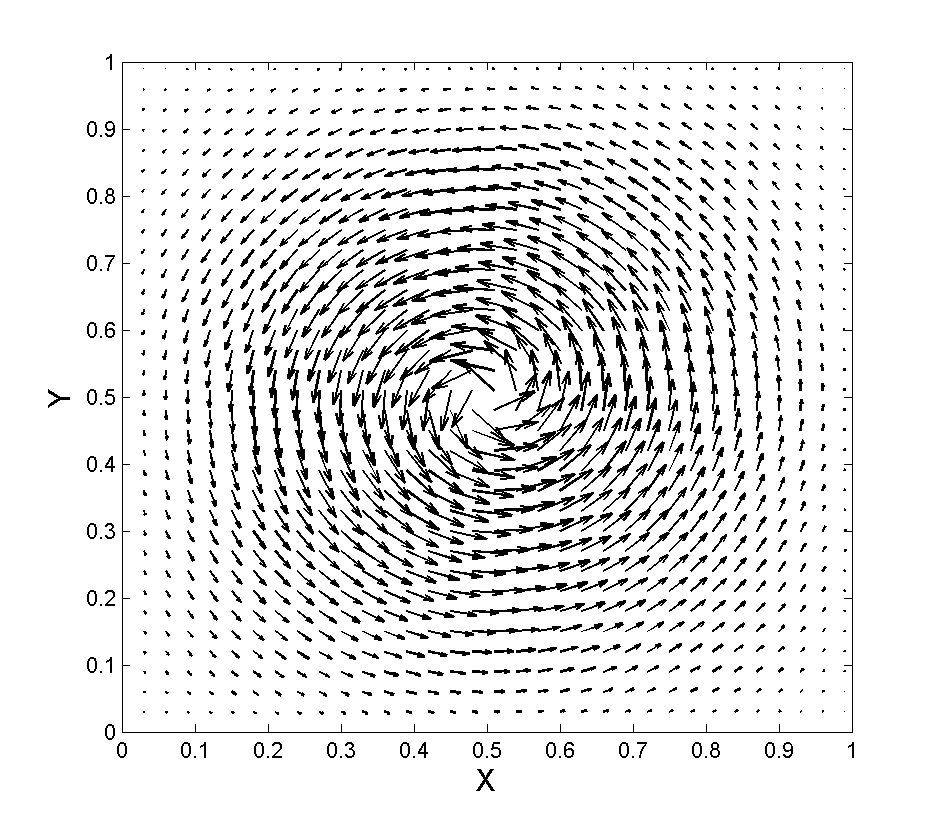}
  \caption{Distribution of $\Bw$ on $z=0.5$.
  Left: magnitude. Right: projected vector.}
  \label{fig:wh}
\end{figure}

In this third example, we want to test the performance of the proposed block preconditioner \eqref{pre1:A}.
The information of grid and degree of freedom is listed in Table \ref{pre:grid}.
We use three different magnetic Reynolds number $\textsf{Rm} = 1, 20, 50$ to show the performance of the preconditioenr
$\mathcal{P}$.

From Table \ref{pre1:tau1}, we observe that the quasi-optimality of $\mathcal{P}$  with respect to the grid refinement.
And the preconditioned FGMRES solver is still robust for relatively high physical parameter $\textsf{Rm} = 50$.
Note that the relative error tolerance for FGMRES is $10^{-10}$.

\begin{table}[!htb]
  \centering
  \caption{The mesh sizes and the numbers of DOFs (Example \ref{ex3}).}
  \label{pre:grid}
  \begin{tabular*}{10cm}{@{\extracolsep{\fill}}|c|c|c|c|}
  \hline
  Mesh        &$h$       &DOFs for $(\BJ,\phi)$  &DOFs for $(\BA,r)$  \\\hline
  $\Ct_1$     &0.8660    &408                    &321                 \\\hline
  $\Ct_2$     &0.4330    &2976                   &1937                \\\hline
  $\Ct_3$     &0.2165    &23286                  &13281               \\\hline
  $\Ct_4$     &0.1083    &176640                 &97985               \\\hline
  \end{tabular*}
\end{table}

\begin{table}[!htb]
  \centering
  \caption{FGMRES iteration number with preconditioner $\mathcal{P}$ (Example \ref{ex3}).}
  \label{pre1:tau1}
  \begin{tabular*}{8cm}{@{\extracolsep{\fill}}|c|c|c|c|}
  \hline
  Mesh        &\textsf{Rm} = 1     &\textsf{Rm} = 20    &\textsf{Rm} = 50\\\hline
  $\Ct_1$     &7      &16    &23   \\\hline
  $\Ct_2$     &8      &21    &39   \\\hline
  $\Ct_3$     &8      &22    &44   \\\hline
  $\Ct_4$     &7      &23    &44   \\\hline
  \end{tabular*}
\end{table}

Using the grid $\Ct_4$ we plot the convergence history of FGMRES method in Fig. \ref{fig:FGMRES}
with different $\textsf{Rm}$. Finally, setting $\textsf{Rm} = 50$, we show the simulation of the steady MHD kinematics
\eqref{eq:steady-mhd} with the grid  $\Ct_4$. Fig. \ref{fig:Bh} shows the distribution of $\BB_h$ on cross section $z=0.5$, from which
we clearly see the perturbation by the flow profile $\Bw$. Fig. \ref{fig:Jhy} shows the distribution of $\BJ_h$ on cross section $y=0.5$,
from which we observed the layered structure. At last Fig. \ref{fig:Jhz} shows the magnitude of $|\BJ_h|$ on cross section $z = 0.5$.
Furthermore
\[\|\Div\BJ_h\|_{L^2(\Omega)} \approx 9.0234 \times 10^{-13},\quad \|\Div\BB_h\|_{L^2(\Omega)} = 0.\]

\begin{figure}[!htb]
  \centering
  \includegraphics[width=0.60\textwidth]{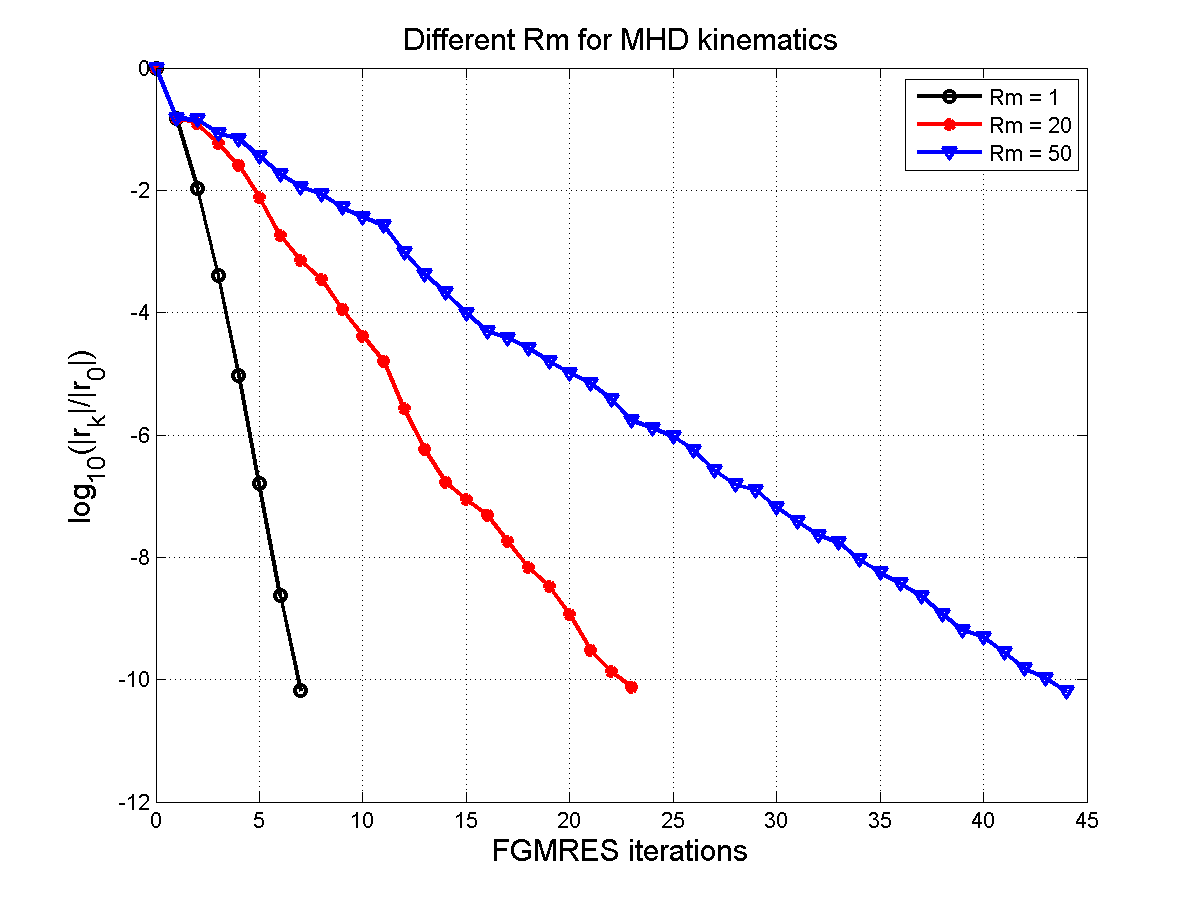}
  \caption{Convergence history of the preconditioned FGMRES solver with different \textsf{Rm} and $\Ct_4$.}
  \label{fig:FGMRES}
\end{figure}

\begin{figure}[!htb]
  \centering
  \includegraphics[width=0.60\textwidth]{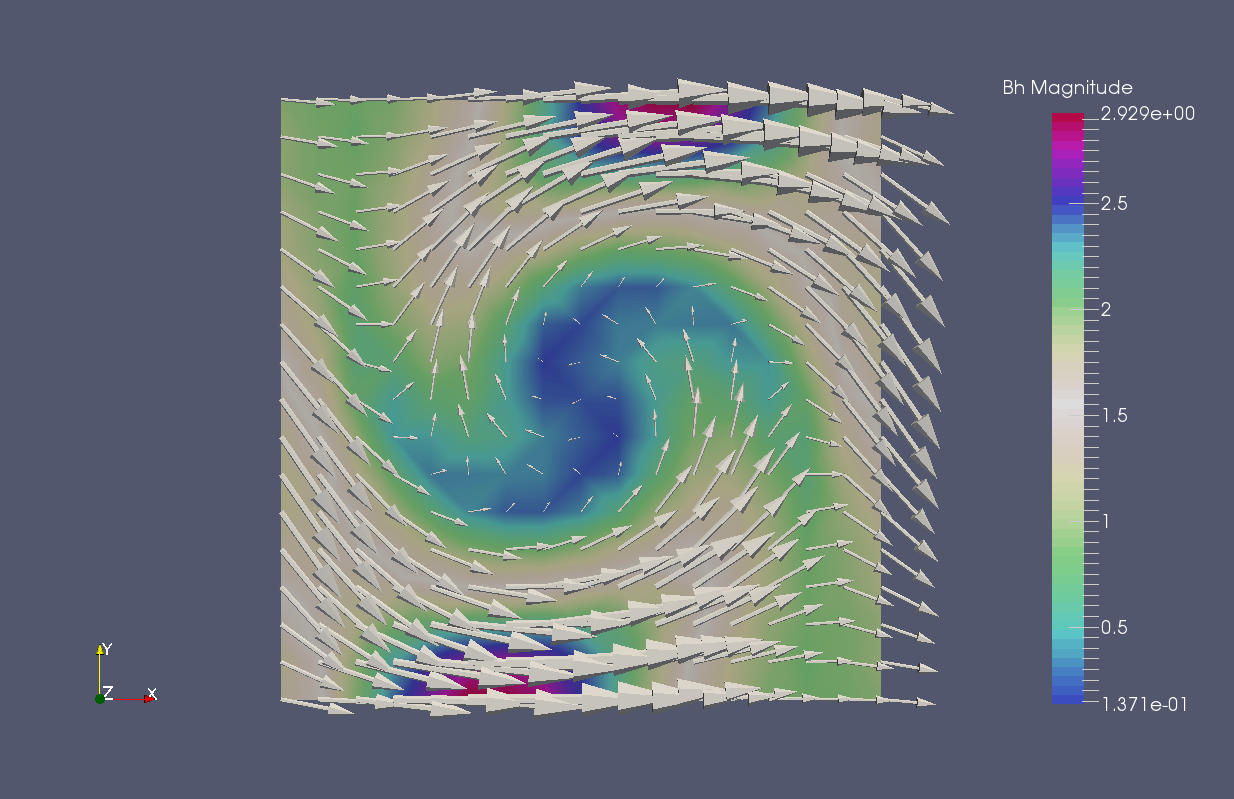}
  \caption{Distribution of $\BB_h$ on cross section $z = 0.5$ with \textsf{Rm} = 50 and $\Ct_4$.}
  \label{fig:Bh}
\end{figure}

\begin{figure}[!htb]
  \centering
  \includegraphics[width=0.60\textwidth]{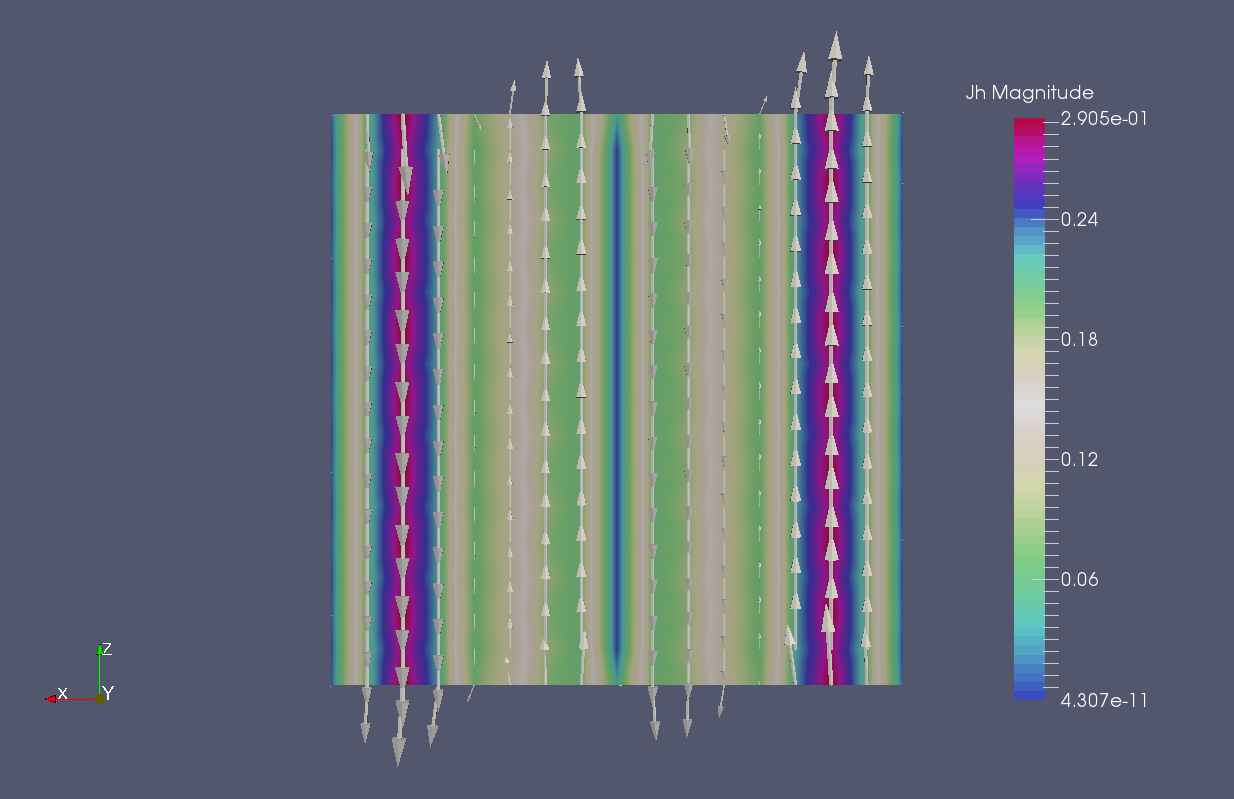}
  \caption{Distribution of $\BJ_h$ on cross section $y = 0.5$ with \textsf{Rm} = 50 and $\Ct_4$.}
  \label{fig:Jhy}
\end{figure}

\begin{figure}[!htb]
  \centering
  \includegraphics[width=0.60\textwidth]{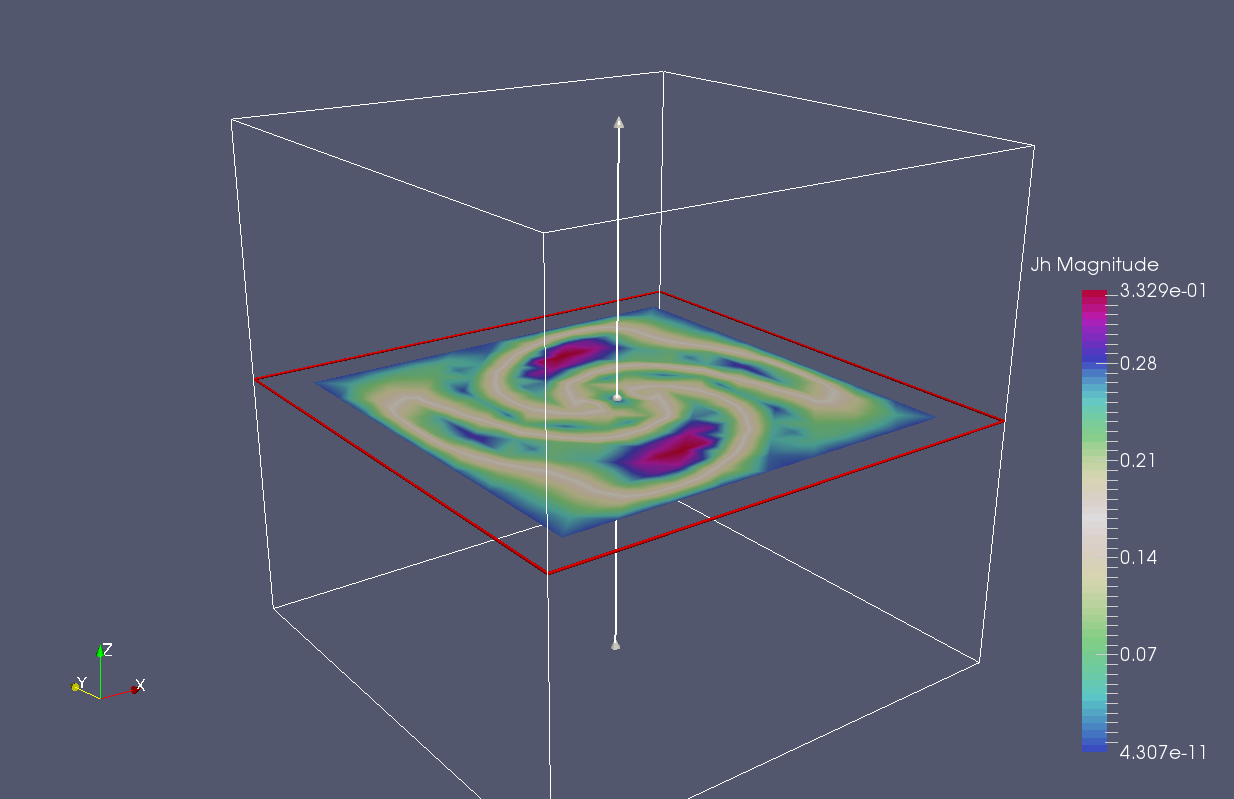}
  \caption{Magnitude of $|\BJ_h|$ on cross section $z = 0.5$ with \textsf{Rm} = 50 and $\Ct_4$.}
  \label{fig:Jhz}
\end{figure}

\section{Conclusions}
In this paper, we develop a new divergence-free finite element method for the MHD kinematics equations which can ensure
exactly divergence-free approximations of the current density and magnetic induction.
Moreover, we devise an efficient preconditioned iterative solver using a block preconditioner. For future research, one could consider the transient case \eqref{eq:new-mhd-dimen},
more robust preconditioner and the coupling with the Navier-Stokes equations.
Considering the uncertainty in the equations as\cite{phil2014} using the FE scheme proposed in this paper
is also interesting.

\newpage
\section*{Acknowledgments}
The author would like to thank his advisor Prof. Wei Ying Zheng for leading him to study the computational methods for MHD
and his guidance throughout the graduate career. The author is indebted to Prof. Dr. Ralf Hiptmair for the fruitful
discussion about the usage of vector potential in MHD model. The author would also like to acknowledge the support of the
the Institute of Computational Mathematics and Scientific/Engineering Computing of Chinese Academy of Sciences, which provided the
computer facilities and other resources for this research,
especially the PHG platform (\textcolor[rgb]{0.00,0.07,1.00}{http://lsec.cc.ac.cn/phg/}) developed by Prof. Lin Bo Zhang.

\bibliographystyle{amsplain}

\end{document}